\begin{document}

\newtheorem{theorem}{Theorem}
\newtheorem{thm}{Theorem}
\newtheorem{claim}[thm]{Claim}
\newtheorem{cor}[thm]{Corollary}
\newtheorem{prop}[thm]{Proposition} 
\newtheorem{definition}[thm]{Definition}
\newtheorem{question}[thm]{Open Question}
\newtheorem{conj}[thm]{Conjecture}
\newtheorem{rem}[thm]{Remark}
\newtheorem{prob}{Problem}
\newtheorem*{theorem*}{Theorem}
\newtheorem*{prop*}{Proposition}
\renewcommand*{\thetheorem}{\Alph{theorem}}
\newtheorem{lem}[theorem]{Lemma}
\def\ccr#1{\textcolor{red}{#1}}
\def\cco#1{\textcolor{orange}{#1}}
\def\ccc#1{\textcolor{cyan}{#1}}

\newtheorem{ass}[thm]{Assumption}

\newtheorem{lemma}[thm]{Lemma}

\newcommand{\GL}{\operatorname{GL}}
\newcommand{\SL}{\operatorname{SL}}
\newcommand{\lcm}{\operatorname{lcm}}
\newcommand{\ord}{\operatorname{ord}}
\newcommand{\Tr}{\operatorname{Tr}}
\newcommand{\Span}{\operatorname{Span}}

\numberwithin{equation}{section}
\numberwithin{theorem}{section}
\numberwithin{thm}{section}
\numberwithin{table}{section}

\def\sssum{\mathop{\sum\!\sum\!\sum}}
\def\ssum{\mathop{\sum\ldots \sum}}
\def\dsum{\mathop{\sum \ \sum}}

\def\vol {{\mathrm{vol\,}}}
\def\squareforqed{\hbox{\rlap{$\sqcap$}$\sqcup$}}
\def\qed{\ifmmode\squareforqed\else{\unskip\nobreak\hfil
\penalty50\hskip1em\null\nobreak\hfil\squareforqed
\parfillskip=0pt\finalhyphendemerits=0\endgraf}\fi}

\newcommand{\rank}{\operatorname{rank}}

\def \balpha{\bm{\alpha}}
\def \bbeta{\bm{\beta}}
\def \bgamma{\bm{\gamma}}
\def \blambda{\bm{\lambda}}
\def \bchi{\bm{\chi}}
\def \bphi{\bm{\varphi}}
\def \bpsi{\bm{\psi}}
\def \bomega{\bm{\omega}}
\def \btheta{\bm{\vartheta}}
\def \bmu{\bm{\mu}}
\def \bnu{\bm{\nu}}

\newcommand{\bfxi}{{\boldsymbol{\xi}}}
\newcommand{\bfrho}{{\boldsymbol{\rho}}}

\def\cA{{\mathcal A}}
\def\cB{{\mathcal B}}
\def\cC{{\mathcal C}}
\def\cD{{\mathcal D}}
\def\cE{{\mathcal E}}
\def\cF{{\mathcal F}}
\def\cG{{\mathcal G}}
\def\cH{{\mathcal H}}
\def\cI{{\mathcal I}}
\def\cJ{{\mathcal J}}
\def\cK{{\mathcal K}}
\def\cL{{\mathcal L}}
\def\cM{{\mathcal M}}
\def\cN{{\mathcal N}}
\def\cO{{\mathcal O}}
\def\cP{{\mathcal P}}
\def\cQ{{\mathcal Q}}
\def\cR{{\mathcal R}}
\def\cS{{\mathcal S}}
\def\cT{{\mathcal T}}
\def\cU{{\mathcal U}}
\def\cV{{\mathcal V}}
\def\cW{{\mathcal W}}
\def\cX{{\mathcal X}}
\def\cY{{\mathcal Y}}
\def\cZ{{\mathcal Z}}
\def\Ker{{\mathrm{Ker}}}

\def\sA{{\mathscr A}}

\def\NmQR{N(m;Q,R)}
\def\VmQR{\cV(m;Q,R)}

\def\Xm{\cX_m}

\def \A {{\mathbb A}}
\def \B {{\mathbb A}}
\def \C {{\mathbb C}}
\def \F {{\mathbb F}}
\def \G {{\mathbb G}}
\def \L {{\mathbb L}}
\def \K {{\mathbb K}}
\def \N {{\mathbb N}}
\def \Q {{\mathbb Q}}
\def \R {{\mathbb R}}
\def \Z {{\mathbb Z}}

\def \fA{\mathfrak A}
\def \fC{\mathfrak C}
\def \fL{\mathfrak L}
\def \fR{\mathfrak R}
\def \fS{\mathfrak S}

\def \fUg{{\mathfrak U}_{\mathrm{good}}}
\def \fUm{{\mathfrak U}_{\mathrm{med}}}
\def \fV{{\mathfrak V}}
\def \fG{\mathfrak G}
\def \f{\mathfrak G}

\def\e{{\mathbf{\,e}}}
\def\ep{{\mathbf{\,e}}_p}
\def\eq{{\mathbf{\,e}}_q}

 \def\\{\cr}
\def\({\left(}
\def\){\right)}
\def\fl#1{\left\lfloor#1\right\rfloor}
\def\rf#1{\left\lceil#1\right\rceil}

\def\Im{{\mathrm{Im}}}

\def \oF {\overline \F}

\newcommand{\pfrac}[2]{{\left(\frac{#1}{#2}\right)}}

\def \Prob{{\mathrm {}}}
\def\e{\mathbf{e}}
\def\ep{{\mathbf{\,e}}_p}
\def\epp{{\mathbf{\,e}}_{p^2}}
\def\em{{\mathbf{\,e}}_m}

\def\Res{\mathrm{Res}}
\def\Orb{\mathrm{Orb}}

\def\vec#1{\mathbf{#1}}
\def \va{\vec{a}}
\def \vb{\vec{b}}
\def \vc{\vec{c}}
\def \vs{\vec{s}}
\def \vu{\vec{u}}
\def \vv{\vec{v}}
\def \vw{\vec{w}}
\def\vlam{\vec{\lambda}}
\def\flp#1{{\left\langle#1\right\rangle}_p}

\def\mand{\qquad\mbox{and}\qquad}
\theoremstyle{definition}
\newtheorem*{theorem A}{Theorem A}
\newtheorem*{theorem B}{N\"olker's Theorem}
\newtheorem{proposition}{Proposition}[section]
\newtheorem{corollary}[theorem]{Corollary}
\newtheorem{problem}{Problem}
\newtheorem {conjecture}{Conjecture}
\theoremstyle{remark}
\newtheorem{remark}{Remark}[section]
\theoremstyle{remark}
\newtheorem{remarks}{Remarks}


\title[Coefficients of binary cyclotomic polynomials]
{On nonzero coefficients of binary cyclotomic polynomials}

\author[I. E. Shparlinski] {Igor E. Shparlinski}\thanks{During the preparation of this work, I.S. was supported in part by the Australian Research Council Grant~DP230100534}
\address{School of Mathematics and Statistics, University of New South Wales, Sydney NSW 2052, Australia}
\email{igor.shparlinski@unsw.edu.au}

\author [L. P. Wijaya]{Laurence P. Wijaya}
\address{Department of Mathematics, University of Kentucky, 715 Patterson Office Tower, Lexington, KY 40506, USA}
\email{laurence.wijaya@uky.edu}

\keywords{Binary cyclotomic polynomials, Kloosterman sums}

\begin{abstract} Let $\vartheta(m)$ be the number of nonzero coefficients in the $m$-th cyclotomic polynomial. For real $\gamma > 0$ and $x \ge 2$ we define 
$$H_{\gamma}(x)=\#\left\{m:~m=pq \le x, \ p<q\text{ primes }, \
\vartheta(m)\le m^{1/2+\gamma}\right\},
$$
and show that for any fixed $\eta> 0$, uniformly over $\gamma$ with 
$$9/20+\eta \le \gamma\le 1/2 -\eta,
$$ 
we have  an asymptotic formula 
$$
H_{\gamma}(x)\sim C(\gamma)x^{1/2+\gamma}/ \log x, \qquad x \to \infty, 
$$
where  $C(\gamma)> 0$ is an explicit constant depending only on $\gamma$. 
This extends the previous result of {\'E}.~Fouvry (2013), which has $12/25$ instead  of $9/20$. 
This improvement is based on new ingredient including 
 work of W.~Duke, J.~Friedlander  and H.~Iwaniec  (1997). 
\end{abstract}

\subjclass{11B83, 11L07}
\maketitle

\tableofcontents

\section{Introduction}

\subsection{Motivation and main result}  
 For $m\ge 1$ integer, let 
 $$
\Phi_m(x)=\prod_{\substack{j=1\\\gcd(j,m)=1}}^{m}(x-\em(j))
$$
 be the $m$-th cyclotomic polynomial,  where 
$$\e(z) = \exp(2 \pi z) \mand \em(x)=\e(x/m).
$$ 
It is easy to see that cyclotomic polynomials are monic and have integer coefficients.

There is an extensive literature dedicated to studying the coefficients of cyclotomic polynomials, 
including their size, number of nonzero coefficients, gaps between non-zero coefficients and 
several others topics, see~\cite{AAHL, B12,CCLMS,F13,KMSZ} and references therein as well as
 exhaustive surveys by Herrera-Poyatos and Moree~\cite{H-PM} and by Sanna~\cite{San}. 

In particular, it is natural to study the sparsity of cyclotomic polynomials. Thus,  we define 
$\vartheta(m)$ as the  number  of nonzero coefficients of $\Phi_m$ and investigate  
various improvements of the following  trivial inequalities
$$
    1\le \vartheta(m)\leq \varphi(m)+1, 
$$
where $\varphi$ is the Euler totient function.

Here we are interested  in the special case where $m$ has the form $m = pq$, $p<q$ are primes. In this case,  $\Phi_{pq}$ is called a \textit{binary} cyclotomic polynomial. Binary, and similarly defined 
 \textit{ternary} cyclotomic polynomials, have many 
special properties and thus have attracted a lot of attention.  
For example, by a classical result of Migotti~\cite{Mig} 
the coefficients of binary cyclotomic polynomial  take values in the set $\{-1, 0, 1\}$. 

We recall that it has been known 
since the work of Gallot   and Moree~\cite{GalMor}, see also more recent results of~\cite{CCLMS,F13}, 
that the study of cyclotomic polynomials employs  rather 
advanced analytic techniques, such as bounds of {\it Kloosterman sums\/} and some other related exponential sums. Here we give further applications of these methods and in particular improve a result of Fouvry~\cite[Theorem~1]{F13}.  

More precisely, as in~\cite{F13}, for  real $\gamma > 0$ and $x \ge 2$ we define 
$$H_{\gamma}(x)=\#\left\{m:~m=pq \le x, \ p<q\text{ primes }, \
\vartheta(m)\le m^{1/2+\gamma}\right\}.
$$
It is also convenient to define 
\begin{equation}\label{eq:C-gamma}
        C(\gamma) =\frac{2}{1+2\gamma}\log \frac{1+2\gamma}{1-2\gamma}.
\end{equation}
Then, by~\cite[Theorem~1]{F13},  for every $\eta>0$, uniformly over $\gamma$ with 
\begin{equation}\label{eq:F-range}
\frac{12}{25}+\eta\le \gamma \le \frac{1}{2}-\eta,
\end{equation}
we have  an asymptotic formula 
\begin{equation}\label{eq:Asymp}
H_{\gamma}(x)\sim C(\gamma)\frac{x^{1/2+\gamma}}{\log x},  \qquad x \to \infty.
\end{equation}

The proof of~\eqref{eq:Asymp} in~\cite{F13} is based on bounds of analogues of Kloosterman 
sums over primes. 
Here we use a similar approach and show that the range~\eqref{eq:F-range} can be extended.  Namely, we have 

\begin{thm}\label{thm:H-gamma}
For any fixed $\eta > 0$, uniformly over $\gamma$ with 
$$
\frac{9}{20}+\eta\le \gamma \le \frac{1}{2}-\eta,
$$
the asymptotic formula~\eqref{eq:Asymp}  holds with $C(\gamma)$ given 
by~\eqref{eq:C-gamma}.
\end{thm}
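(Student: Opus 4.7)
The starting point, as in~\cite{F13}, is the Lam--Leung formula: for distinct primes $p<q$,
\[
\vartheta(pq)=2\bar p_q\,\bar q_p-1,
\]
where $\bar p_q\in\{1,\ldots,q-1\}$ and $\bar q_p\in\{1,\ldots,p-1\}$ are the modular inverses of $p\pmod{q}$ and $q\pmod{p}$, linked by $p\bar p_q+q\bar q_p=pq+1$. Setting $b=\bar q_p$ gives $\bar p_q=(q(p-b)+1)/p$, so the condition $\vartheta(pq)\le(pq)^{1/2+\gamma}$ is equivalent, up to lower-order terms, to the short-interval condition
\[
\min\{b,\,p-b\}\ \le\ \tfrac{1}{2}\,p^{1/2+\gamma}q^{\gamma-1/2}.
\]
The right-hand side must be at least $1$ for the inequality to be non-vacuous, so the main contribution to $H_\gamma(x)$ comes from primes $p$ in the range $x^{1/2-\gamma}\lesssim p\le\sqrt{x}$.

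Having reduced to a residue-class condition, I would write
\[
H_\gamma(x)=\sum_{p}\#\bigl\{q\text{ prime}:\ p<q\le x/p,\ \bar q_p\in I_\gamma(p,q)\bigr\},
\]
where $I_\gamma(p,q)\subseteq(\Z/p\Z)^{*}$ is a union of two arcs near $0$ and near $p$, of total length $L(p,q)\asymp p^{1/2+\gamma}q^{\gamma-1/2}$. The main term is extracted by replacing the inner count by its heuristic value $L(p,q)\pi(x/p)/\varphi(p)$. Parametrising $p=x^{\alpha}$ and converting the sum over primes $p\in[x^{1/2-\gamma},\sqrt{x}]$ into the integral
\[
\int_{1/2-\gamma}^{1/2}\frac{d\alpha}{\alpha(1-\alpha)}=\log\frac{1+2\gamma}{1-2\gamma}
\]
then reproduces exactly the constant $C(\gamma)$ in~\eqref{eq:C-gamma} and yields $H_\gamma(x)\sim C(\gamma)x^{1/2+\gamma}/\log x$ at the heuristic level.

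The core of the work is controlling the discrepancy between the inner count and its heuristic value. After Fourier-expanding the indicator of $I_\gamma(p,q)$ modulo $p$, this reduces to bounding incomplete Kloosterman sums over primes of the form
\[
S(Y;a,b,p)=\sum_{\substack{q\le Y\\ q\text{ prime}}}\e_p(aq+b\bar q),\qquad\gcd(ab,p)=1,
\]
for $Y\le x/p$ and $(a,b)$ ranging up to Fourier cutoffs dictated by $L(p,q)$. Fouvry's range $12/25$ arises from combining bounds for such Kloosterman sums with a combinatorial decomposition (such as Vaughan's identity) into Type~I and Type~II bilinear pieces and optimising the resulting balance. The extension to $9/20$ should come from feeding in sharper bilinear bounds---of Fouvry--Kowalski--Michel flavour for bilinear forms in trace functions, or Deshouillers--Iwaniec-type averages of Kloosterman sums---into the balanced Type~II regime, where the Weil pointwise bound alone is insufficient.

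The main obstacle is precisely this balanced Type~II regime, where $p$ and $q$ are both of order $x^{1/2}$ and $p$ sits near the edge of admissibility $p\asymp x^{1/2-\gamma}$. The task is to establish a suitable bilinear Kloosterman bound uniformly across the entire window $p\in[x^{1/2-\gamma},\sqrt{x}]$ at all dyadic scales in $a$ and $b$, whilst also controlling the contributions of very small primes $p$ and the tails of the Fourier expansion. Once these bilinear estimates assemble into an error term of $o(x^{1/2+\gamma}/\log x)$ uniformly over the stated range of $\gamma$, the asymptotic formula follows from the main-term computation above.
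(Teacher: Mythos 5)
Your outline reproduces the architecture of the actual proof: the Carlitz identity $\vartheta(pq)=2\overline{p}_q\overline{q}_p-1$, the reduction to counting primes $q$ with $\overline{q}_p$ in two short arcs modulo $p$ of length $\asymp p^{1/2+\gamma}q^{\gamma-1/2}$, the restriction to $p\gtrsim x^{1/2-\gamma}$, the main-term integral producing $\log\frac{1+2\gamma}{1-2\gamma}$, and the reduction of the error term to incomplete Kloosterman sums over primes $\sum_q \ep(a\overline{q}_p)$. All of that is Fouvry's framework, which the paper also adopts (with the Erd\H{o}s--Tur\'an inequality in place of Vinogradov's approximation of the interval indicator).

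However, the proposal has a genuine gap at exactly the point where the new content of the theorem lives. You correctly identify that the whole difficulty is the error term for $p$ near and above $x^{1/3}$, but you then only gesture at ``sharper bilinear bounds of Fouvry--Kowalski--Michel flavour or Deshouillers--Iwaniec-type averages'' and explicitly leave open ``the task'' of establishing them uniformly; no specific estimate is stated, let alone combined with the main term to produce the exponent $9/20$. The paper closes this gap with three concrete inputs used as black boxes rather than via a Vaughan-type decomposition of your own: the pointwise Korolev--Changa bound $|S_p(a;y,z)|\le y^{15/16}p^{o(1)}$ (Lemma~\ref{lem:KC-bound}), Irving's bound on average over $p$ (Lemma~\ref{lem:Irving}), and --- crucially, since both of these require $Q$ to be large relative to $P$ --- a completed form of the Duke--Friedlander--Iwaniec bilinear bound (Lemma~\ref{lem:DFI-bound}) to cover the medium range $x^{1/3}\mathcal{L}^{-100}\le P\le (2x)^{2/5}$ where the other two do not apply. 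The threshold $9/20$ is then the solution of an explicit exponent optimisation ($H(\gamma_0)=1/2$ in the notation of the paper), and none of this bookkeeping appears in your write-up; without it the claim that the admissible range extends to $9/20+\eta$ is unsupported. A secondary, smaller point: your heuristic main term, as literally written (replacing the inner count by $L(p,q)\pi(x/p)/\varphi(p)$ with $q$ frozen at $x/p$), yields only $\log\frac{1+2\gamma}{1-2\gamma}$ and misses the factor $\frac{2}{1+2\gamma}$ in $C(\gamma)$, which arises from summing $q^{\gamma-1/2}$ over primes $q\le x/p$ by partial summation; this needs to be done carefully to match~\eqref{eq:C-gamma}.
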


We remark that 
$$
\frac{12}{25}=0.48 \mand \frac{9}{20} = 0.45.
$$

Theorem~\ref{thm:H-gamma}, follows  from repeating the argument of~\cite{F13} however with a stronger version of~\cite[Proposition~1]{F13}, 
which we formulate as  Proposition~\ref{prop: RPQ} and which we derive using some new argument. 

More precisely, our improvement of~\cite[Proposition~1]{F13} relies on new bounds of Kloosterman sums over primes from~\cite{Ir} and~\cite{KC}. 
However these 
new bounds alone are not sufficient for improving~\cite[Proposition~1]{F13}. This is because the result of 
both~\cite{Ir} and~\cite{KC}  are rather restrictive and require the parameters involved to be in certain ranges. 
To overcome this constraint, we also use one new technical 
tool,  which we derive from a result of Duke, Friedlander and Iwaniec~\cite{DFI}.

\subsection{Notation and conventions}

We adopt the Vinogradov notation `$\ll$',  that is,
$$
A\ll B~\Longleftrightarrow~A=O(B)~\Longleftrightarrow~|A|\le cB
$$
for some constant $c>0$ which  
may depend on the 
real positive parameters $\varepsilon$ and $\eta$, and is absolute otherwise.   

For a finite set $\cS$ we use $\# \cS$ to denote its cardinality.

Given two relatively prime integers $k$ and $\ell$, we write $\overline{k}_\ell $ for
the unique integer in $[1,\ell)$ such that $\overline{k}_\ell k\equiv 1 \mod \ell$. 

Throughout the paper, the letters  $p$ and $q$ always denote prime numbers.

Furthermore, we always assume that $x$ is a sufficiently large real number and $\eta$ and $\gamma\le 1/2-\eta$ are fixed. 
Next, we introduce the following quantities: 
$$
\cL = \log 2x \mand \xi 
=1+ \cL^{-1}, 
$$
and the function of real variable $t$: 
\begin{equation}
\label{eq:theta-0}
\rho(t) = \frac{1}{2}\(1+ t^{-1} 
- \sqrt{\(1+ t^{-1}\)^2- 2t^{-1} \(t^{1/2+\gamma}+1\)}\).
\end{equation}

We remark that we have changed the used in~\cite{F13} notation $\vartheta_0(t)$ to $\rho(t)$ 
in order to distinguish better from our main quantity $\vartheta(m)$. 
In fact we only need to know that 
\begin{equation}
\label{eq:theta-0-Asymp}
\rho(t) =\frac{t^{\gamma-1/2}}{2}+O(t^{2\gamma-1}), \qquad t \to \infty, 
\end{equation}
see Lemma~\ref{lem:theta-0} below.

For real  positive $u$ and $U$,  we write 
\begin{equation}\label{eq:approx}
u \approx  U
\end{equation}
 as an equivalent of 
$U  < u \le \xi U$. 

\subsection{Approach} 
We base our argument on  the ideas of Fouvry~\cite{F13}. 

In particular, we exploit a link between $H_{\gamma}(x)$ and bounds of exponential sums 
with relies 
on an explicit expression for  $\vartheta(pq)$ in terms of $p$ and $q$. 
More explicitly, by a classical result of Carlitz~\cite{Carl} (see also~\cite[Proposition~A]{F13}) 
 if  $p<q$ are two distinct primes, then 
$$
\vartheta(pq) = 2pq \overline{p}_q\overline{q}_p-1. 
$$ 
This can be rewritten in terms of $\overline{q}_p$ only 
as 
\begin{equation}\label{eq:theta=2pq}
\vartheta(pq) = 2 pq  \frac{\overline{q}_p}{p}\(1 + \frac{1}{pq}- \frac{\overline{q}_p}{p}\),
\end{equation}
see~\cite[Equation~(11)]{F13}.

To  establish Theorem~\ref{thm:H-gamma},   for real $P,Q\ge 1$,  we define 
by $R_{\gamma}(P,Q)$ the cardinality 
of the following auxiliary set (see~\cite[Equation~(16)]{F13}), 
    \begin{align*}
 R_{\gamma}&(P,Q)\\
 & \quad=\#\left\{(p,q) :~p<q,\ pq\le x, \ p \approx P, \ q \approx Q, 
    \ \frac{\overline{q}_p}{p}\le \rho(PQ)\right\}, 
    \end{align*}
where we recall our convention~\eqref{eq:approx}. This leads us to investigating the distribution of the 
fractions $\overline{q}_p/p$ which we approach via the {\it  Erd\H{o}s--Tur\'{a}n inequality\/} 
(see Lemma~\ref{lem:ET} below) and bounds 
of exponential sums. We also note that our  use  of the Erd\H{o}s--Tur\'{a}n inequality
instead of approximating the characteristic 
function of an interval via  a celebrated 
result of Vinogradov~\cite[Chapter~I, Lemma~12]{Vin} (as in~\cite{F13}),  leads to some technical simplifications.  

We also use the fact that  one only needs to consider  pairs $(P,Q)$ which satisfy
\begin{equation}\label{eq: PQ Cond}
    P\le \xi Q, \mand  \kappa_oQ^{(1-2\gamma)/(1+2\gamma)}\le P\le xQ^{-1}
\end{equation}
with 
$$
    \kappa_0=4^{-2/(1+2\gamma)} ,
 $$
 see~\cite[Equation~(21)]{F13}. 


Theorem~\ref{thm:H-gamma} is then a direct consequence of the following adjustment of~\cite[Proposition~1]{F13}
with  a wider range of $\gamma$. 

\begin{prop}\label{prop: RPQ}
For any fixed $\eta > 0$, uniformly over $\gamma$ with 
$$
\frac{9}{20}+\eta\le \gamma \le \frac{1}{2}-\eta,
$$
and $(P,Q)$ satisfying~\eqref{eq: PQ Cond}, we have 
\begin{align*} 
        R_{\gamma}(P,Q)=\frac{1}{2}(PQ)^{\gamma-1/2}&\(1+O(\mathcal{L}^{-1})\)R(P,Q) \\
        & \qquad +O\(x^{1/2+\gamma}\mathcal{L}^{-6} + Q\mathcal{L}^{-4}\), 
\end{align*}
where 
$$
R(P,Q) =  \dsum_{\substack{p\approx P, \ q\approx Q\\
p<q,\ pq\le x}} 1.
$$
\end{prop}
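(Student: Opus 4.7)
The plan is to write
\[
R_\gamma(P,Q)=\dsum_{\substack{p\approx P,\ q\approx Q\\ p<q,\ pq\le x}} \chi\!\(\overline{q}_p/p\),
\]
where $\chi$ denotes the indicator function of $[0,\rho(PQ)]$, and then, for each fixed $p\approx P$, to replace $\chi(\overline{q}_p/p)$ by its truncated Fourier expansion at some level $H$ via the Erd\H{o}s--Tur\'{a}n inequality (Lemma~\ref{lem:ET}). This produces a main term $\rho(PQ)\,R(P,Q)$ together with an error
\[
\frac{R(P,Q)}{H}+\sum_{1\le h\le H}\frac{1}{h}\sum_{p\approx P}\left|\sum_{\substack{q\approx Q\\ p<q\le x/p}}\e_p(h\overline{q}_p)\right|.
\]
Plugging the asymptotic~\eqref{eq:theta-0-Asymp} into $\rho(PQ)\,R(P,Q)$ yields $\tfrac{1}{2}(PQ)^{\gamma-1/2}(1+O(\mathcal{L}^{-1}))R(P,Q)$, the secondary term from~\eqref{eq:theta-0-Asymp} being absorbed into the declared error since $PQ\le x$ and $\gamma\le 1/2-\eta$.

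The heart of the argument is the bound
\[
\sum_{1\le h\le H}\frac{1}{h}\sum_{p\approx P}\left|\sum_{\substack{q\approx Q\\ p<q\le x/p}}\e_p(h\overline{q}_p)\right|\ll x^{1/2+\gamma}\mathcal{L}^{-6}+Q\mathcal{L}^{-4},
\]
to be proved uniformly over $(P,Q)$ subject to~\eqref{eq: PQ Cond} with $H$ chosen of order $x^{1/2-\gamma}\mathcal{L}^{A}$ for a sufficiently large $A=A(\eta)$. I would split the $h$-sum dyadically and within each block detect the primality of $q$ by a combinatorial identity of Vaughan or Heath--Brown type, reducing the estimate to bilinear averages
\[
\sum_{p\approx P}\left|\mathop{\sum\sum}_{\substack{m\approx M,\ n\approx N}}\alpha_m\beta_n\,\e_p(h\overline{mn}_p)\right|
\]
over various factorisations $Q\asymp MN$. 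The Kloosterman sum bounds over primes from~\cite{Ir} and~\cite{KC} handle pairs $(M,N)$ in their respective admissible windows, while the new technical tool derived from~\cite{DFI} covers the remaining ranges, which is precisely the ingredient unavailable to Fouvry in~\cite{F13}.

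The principal obstacle is to verify that, as $(P,Q,h)$ ranges over the box dictated by~\eqref{eq: PQ Cond} combined with $h\le H$, every relevant factorisation $(M,N)$ lands inside the admissible region of at least one of the three estimates from~\cite{Ir},~\cite{KC}, or the~\cite{DFI}-based auxiliary lemma, with quantitative strength sufficient to dominate $x^{1/2+\gamma}\mathcal{L}^{-6}$. The most delicate case is expected to arise when $P$ sits close to the lower endpoint $\kappa_0 Q^{(1-2\gamma)/(1+2\gamma)}$ of~\eqref{eq: PQ Cond} and $h$ is near the top of its range, and it is exactly this worst case that pins the numerical value of the threshold to $9/20$ rather than $12/25$.
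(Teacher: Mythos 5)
Your high-level plan---Erd\H{o}s--Tur\'an followed by Kloosterman-sum estimates---is the right starting point, and you correctly identify the three inputs (Lemmas~\ref{lem:KC-bound}, \ref{lem:Irving} and the DFI-based Lemma~\ref{lem:DFI-bound}), as well as the treatment of $\rho(PQ)\,R(P,Q)$ via~\eqref{eq:theta-0-Asymp}. But the central step of your proposal is incoherent. You suggest detecting the primality of $q$ by a Vaughan or Heath--Brown identity, producing bilinear sums over factorisations $Q\asymp MN$, and then ``handling pairs $(M,N)$'' by the bounds of~\cite{Ir} and~\cite{KC}. Those results, however, are \emph{already} estimates for Kloosterman sums restricted to primes $q$, not general bilinear-form estimates, so once $q$ has been replaced by a product $mn$ they no longer apply. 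The paper performs no combinatorial decomposition at all: the exponential sums $S_p(a;y_p,z_p)$ produced by Lemma~\ref{lem:ET} are sums over primes $q$, and Lemmas~\ref{lem:KC-bound} and~\ref{lem:Irving} are applied to them directly. The DFI-based Lemma~\ref{lem:DFI-bound} is also applied directly to $S_p(a;y_p,z_p)$, after a completion step that trades the $p$-dependent interval $[y_p,z_p]$ for the fixed interval $[Q,2Q]$ at a logarithmic cost---this is exactly why the free weights $\alpha_p,\beta_q$ in Lemma~\ref{lem:DFI-Original} matter.

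Moreover, the delicate case is not the one you describe. The argument splits three ways according to the size of $P$ relative to $x$: for $P<x^{1/3}\cL^{-100}$ the proof of~\cite{F13} applies unchanged (that range is insensitive to the lower bound on $\gamma$); for medium $P\le (2x)^{2/5}$ only Lemma~\ref{lem:DFI-bound} is used; for large $P>(2x)^{2/5}$ one has $2Q\le P^{3/2}$, which is precisely what makes Lemmas~\ref{lem:KC-bound} and~\ref{lem:Irving} applicable. The value $9/20$ emerges from checking, with the explicit exponents in~\eqref{eq:Cond1} and~\eqref{eq:Cond2}, that one of the resulting conditions on $P$ covers every $P$ in $[x^{1/3}\cL^{-100},x^{1/2}]$; the lower endpoint $\kappa_0 Q^{(1-2\gamma)/(1+2\gamma)}$ of~\eqref{eq: PQ Cond} plays no role here, as it is subsumed in the small-$P$ case. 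Your proposal omits this case split, the reduction to $PQ\ge x\cL^{-12}$, and the verification that the error is $\ll x^{1/2+\gamma}\cL^{-6}+Q\cL^{-4}$. (The paper also takes $A=P-1$ in Lemma~\ref{lem:ET}, which makes $\gcd(a,p)=1$ automatic and produces the $Q\cL^{-4}$ term; your $H\asymp x^{1/2-\gamma}\cL^{A'}$ is likewise admissible but is a cosmetic difference, not the missing idea.)
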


Hence, the rest of the paper is dedicated to the proof of Proposition~\ref{prop: RPQ}, which, as we have mentioned,  
immediately yields Theorem~\ref{thm:H-gamma} after substitution in the argument of~\cite{F13}.

\section{Preliminaries}

\subsection{Kloosterman sums over primes}
The proof for medium and large $P$ in~\cite[Section~4.3]{F13} 
(see also Section~\ref{sec:prelim} below for precise definition 
of these ranges) uses various bounds from~\cite{FS} on  Kloosterman sums over primes, 
which are our main tool as well. 
More precisely,  for real $z\ge y \ge 1$  and  a nonzero integer $a$
we are interested in the sums
$$
    S_p(a;y,z)=  \sum_{\substack{y\le q\le z\\ q \ne p}} \ep\(a \overline{q}_p\). 
$$

First, we record the following result of Korolev and Changa~\cite[Theorem~1]{KC}, which we use instead of 
the bound from~\cite[Theorem~3.2]{FS}, employed in~\cite{F13}. 

\begin{lemma}\label{lem:KC-bound}
If $p^{3/2}\ge z \ge p^{12/13}$, where $p$ is prime, we have for every $\varepsilon>0$
$$
    |S_p(a;y,z)|\le y^{15/16}p^{o(1)}, 
$$
    for every integer $a$ with $\gcd(a,p)=1$ and $1\le y\le z\le 2y$.
\end{lemma}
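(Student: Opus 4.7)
The plan is essentially to cite: this is precisely \cite[Theorem~1]{KC}, so no new proof is required in the present paper. Nevertheless, it is worth sketching the strategy that underlies such a bound, as this clarifies why the exponent $15/16$ and the range $p^{12/13}\le z\le p^{3/2}$ arise together.

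First, one applies a combinatorial identity (Vaughan's identity or Heath--Brown's identity) to the sum over primes $q$, decomposing $S_p(a;y,z)$ into a sum of Type~I and Type~II bilinear expressions. A Type~I contribution has the form $\sum_{d\le D}\alpha_d\sum_{n}\e_p(a\overline{dn}_p)$ with the inner variable running smoothly over an interval, while a Type~II contribution is a genuine bilinear sum $\sum_m\sum_n \alpha_m\beta_n \e_p(a\overline{mn}_p)$ with general bounded coefficients. For the Type~I sums, one completes the inner sum modulo $p$ via Fourier expansion, which reduces the problem to complete Kloosterman sums, to which the Weil bound $|K(a,b;p)|\le 2p^{1/2}$ applies.

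For the Type~II sums, the natural move is Cauchy--Schwarz in the longer variable, followed by opening the square and isolating diagonal and off-diagonal terms. The off-diagonal contribution becomes a sum of incomplete Kloosterman-type sums indexed by the differences $\overline{m_1}_p-\overline{m_2}_p$, which one again completes modulo $p$ and estimates by the Weil bound. Putting the Type~I and Type~II estimates together yields a bound of the form $y^{\alpha}p^{\beta+o(1)}$ where $\alpha$ and $\beta$ depend on the parameter splitting in Vaughan's identity.

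The hard part, and the reason for the restricted range, is the simultaneous optimisation of the parameters $M,N,D$ in the decomposition. The condition $z\ge p^{12/13}$ guarantees that the sum is long enough relative to $p$ for the Cauchy--Schwarz step to produce nontrivial cancellation, while $z\le p^{3/2}$ ensures that the completion step loses only a factor $p^{o(1)}$. Inside this range, a careful balance of Type~I against Type~II errors produces the exponent $15/16$; outside it, one of the two pieces dominates trivially, which is exactly why the lemma must be stated with this window and why in Section~\ref{sec:prelim} it will have to be supplemented by other estimates (from \cite{Ir}, \cite{FS}) covering the complementary ranges of $(p,y,z)$.
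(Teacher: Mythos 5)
Your proposal is correct in the essential respect: the paper offers no proof of this lemma, stating only that it records~\cite[Theorem~1]{KC}, and citing that theorem is exactly what you do. The background sketch you append (Vaughan/Heath--Brown decomposition into Type~I and Type~II sums, completion, Weil bound for complete Kloosterman sums, Cauchy--Schwarz for the bilinear piece) is a fair description of the general machinery in this area and of why the exponent and the admissible window on $z$ are linked, but it is supplementary exposition rather than part of the paper's treatment, and the precise optimisation underlying the exponent $15/16$ and the threshold $p^{12/13}$ is specific to Korolev--Changa's argument rather than something reconstructible from the generic outline.
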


We also recall that~\cite[Theorem~1.5]{FKM} gives an nontrivial bound on very general sums 
over primes, but this does not seem to be of any help for our particular application.

Furthermore, similar to~\cite{F13}, we also use bounds on the sums $S_p(a;y,z)$ on average over $p$. 
However, here, instead of the bound from~\cite{FS},  we use a stronger result by Irving~\cite[Theorem~1.1]{Ir}.
In fact, we need it in slightly more general form when the limits of summation $y_p$ and $z_p$ are allowed to vary 
with $p$; one can easily check that the argument of  Irving~\cite{Ir} produces this extension without any changes.

\begin{lemma}\label{lem:Irving}  For  $P, Q \ge 1$, the inequality
$$
        \sum_{P<p\le 2P} \max_{\gcd(a,p)=1}|S_p(a;y_p,z_p)|\le(Q^{5/8}P^{5/4}+Q^{9/10}P+Q^{13/18}P^{7/6})P^{o(1)}
$$
    holds uniformly over $P^{3/2}\ge 2Q\ge (2P)^{2/3}$ and for any sequences $(y_p)_{P<p\le 2P}$ and
    $(z_p)_{P<p\le 2P}$ satisfying $Q\le y_p\le z_p\le 2Q$.
\end{lemma}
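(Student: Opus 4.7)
The plan is to follow Irving's proof of~\cite[Theorem~1.1]{Ir} line by line, verifying that his argument goes through when the fixed endpoints $y, z$ are replaced by the $p$-dependent $y_p, z_p$. The strategy has three stages: combinatorial decomposition of the sum over primes $q$, bilinear reduction via Cauchy--Schwarz and Poisson summation, and averaged Kloosterman-sum bounds.

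First, I would apply the Heath--Brown identity (say of level $k=3$) to the characteristic function of primes in $[y_p, z_p]$. This expresses the inner sum as a linear combination of $O(\cL^{O(1)})$ sums of the shape
$$
\sum_{m_1, \ldots, m_{2k}} \alpha_1(m_1) \cdots \alpha_{2k}(m_{2k}) \ep\bigl(a \overline{m_1 \cdots m_{2k}}_p\bigr),
$$
where each $\alpha_j$ is supported in a dyadic block $m_j \asymp M_j$ with $M_1 \cdots M_{2k} \asymp Q$, and is either a smooth coefficient of bounded sup-norm or an indicator. A truncated Perron formula (or Mellin contour) detaches the endpoint indicator $\mathbf{1}_{[y_p, z_p]}$ from the modulus $p$ at the cost of an additional $\cL^{O(1)}$ factor, fully absorbing the $p$-dependence of the endpoints at this step. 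After this reduction, the resulting coefficients are independent of $p$.

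Second, I would group the $2k$ variables into two blocks of sizes $M$ and $N$ with $MN \asymp Q$. Applying Cauchy--Schwarz over $p \asymp P$ and over the $M$-block, then performing Poisson summation (completion) in the smooth variable, opens the square and reduces matters to estimating averages of complete Kloosterman sums
$$
\mathrm{Kl}_2\bigl(h a (\overline{n_1} - \overline{n_2});\, p\bigr)
$$
over $p \asymp P$ and $n_1, n_2 \asymp N$, for each Fourier frequency $h$ in a controlled range. Such averages are handled by combining the Weil bound for individual sums with second-moment estimates for Kloosterman sums to varying prime moduli, which in~\cite{Ir} come from the Deshouillers--Iwaniec large sieve for Kloosterman fractions. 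The three terms $Q^{5/8}P^{5/4}$, $Q^{9/10}P$, and $Q^{13/18}P^{7/6}$ arise from optimally balancing the split $(M, N)$ in three distinct regimes, and the range hypothesis $P^{3/2}\ge 2Q\ge (2P)^{2/3}$ is precisely what guarantees that at least one such split is nontrivial.

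The main obstacle is bookkeeping rather than conceptual: one must verify that each step --- Heath--Brown decomposition, Perron separation, Cauchy--Schwarz, Poisson, and the averaged Kloosterman bound --- is uniform in the sup-norm of the coefficients $\alpha_j$ and in $a$ with $\gcd(a, p) = 1$. Both uniformities already hold throughout~\cite{Ir}: the coefficients enter only through their support and $\ell^\infty$-norm, and the Kloosterman inputs depend on $a$ only through the coprimality condition, not through any further structural assumption. Consequently, passing from fixed endpoints to $(y_p, z_p)$ affects only the initial Perron step, and Irving's bound transfers to yield the stated estimate with no change in the exponents.
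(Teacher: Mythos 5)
Your proposal is consistent with the paper, which in fact gives no proof of this lemma at all: it simply cites Irving's Theorem~1.1 from~\cite{Ir} and remarks that ``one can easily check that the argument of Irving produces this extension without any changes'' when the summation endpoints are allowed to depend on $p$. You correctly identify the one point that genuinely needs checking --- separating the $p$-dependent condition $y_p \le q \le z_p$ from the rest of the argument via a Perron/completion device, at the cost of a logarithm absorbed into $P^{o(1)}$, after which uniformity in the coefficients and in $a$ with $\gcd(a,p)=1$ carries the rest --- though some of your attributions for Irving's internal machinery (the Heath--Brown identity, the Deshouillers--Iwaniec large sieve) should be checked against~\cite{Ir}, which builds on the Vaughan-type decomposition of~\cite{FS}; this does not affect the soundness of the transfer.
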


We note, that although the bounds of Lemmas~\ref{lem:KC-bound} and~\ref{lem:Irving}
improve the results from~\cite{FS}, the conditions on the summations ranges  in both of them are 
too restrictive for our application. To overcome this, we a use 
a modification of the following result of Duke, Friedlander and Iwaniec~\cite{DFI}. 

Namely, we recall that by a very special case of~\cite[Theorem~1]{DFI} we have
the following bound.

\begin{lemma}\label{lem:DFI-Original} For $P, Q \ge 1$, let $\alpha_p$ and $\beta_q$ be arbitrary sequences of complex numbers 
with 
$$
\alpha_p, \beta_q \ll 1, \quad P \le p \le 2P, \quad  Q \le q \le 2 Q, 
$$
and a positive integer $a\le PQ$, 
we have 
\begin{align*}
     \left |  \sum_{P<p\le 2P}  \sum_{\substack{Q\le q\le 2Q\\ q \ne p}} \alpha_p, \beta_q \ep\(a \overline{q}_p\)\right|& \\
     \le  (PQ)^{1/2+o(1)} &\(P^{1/2}+ Q^{1/2} + \min\{P,Q\}\).
\end{align*}
\end{lemma}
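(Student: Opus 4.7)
The plan is to deduce the bound as a direct specialization of Theorem~1 of Duke, Friedlander, and Iwaniec~\cite{DFI}, which treats general bilinear forms
$$
\sum_{M < m \le 2M}\ \sum_{\substack{N < n \le 2N \\ \gcd(n,m) = 1}} \alpha_m \beta_n\, \e\(\frac{a\overline{n}_m}{m}\)
$$
with complex weights $|\alpha_m|, |\beta_n| \le 1$ and a nonzero integer $a$ of size $|a| \le MN$, yielding a bound of shape $(MN)^{1/2+o(1)}\(M^{1/2} + N^{1/2} + \min(M,N)\)$. Our task is to check that the prime-supported, $q \ne p$ version above is indeed a legitimate specialization.

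First, I would zero-extend the prime-indexed sequences to integer sequences on the dyadic intervals: set $\tilde{\alpha}_m = \alpha_p$ when $m = p$ is a prime in $(P, 2P]$ and $\tilde{\alpha}_m = 0$ otherwise, and analogously define $\tilde{\beta}_n$ on $(Q, 2Q]$. The boundedness hypothesis of DFI is preserved (any absolute implied constant in $\alpha_p, \beta_q \ll 1$ is absorbed into the $(PQ)^{o(1)}$ factor in the final bound), so their theorem applies with $M = P$ and $N = Q$. Next, I would observe that the coprimality condition $\gcd(n, m) = 1$ appearing in DFI is automatic on the support of the extended weights, since two distinct primes are coprime. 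The diagonal $m = n$ adds to DFI's sum at most $\min(P, Q)$ terms of modulus $\le 1$ relative to ours, and this discrepancy is absorbed into the $(PQ)^{1/2}\min(P,Q)$ summand of the stated bound.

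Finally, I would rewrite $\ep(a\overline{q}_p) = \e(a\overline{q}_p/p)$ to match DFI's notation and invoke their Theorem~1 directly to conclude. The genuine analytic work is entirely internal to~\cite{DFI}, where the authors apply Cauchy--Schwarz in the $m$ variable, complete the resulting inner sum via Poisson summation, and invoke the Weil bound for Kloosterman sums together with a spectral large-sieve argument; none of this needs to be reproduced. The only point requiring care is matching the precise shape of DFI's bound to the three-term form $P^{1/2} + Q^{1/2} + \min(P,Q)$ stated here; this is bookkeeping rather than a substantive obstacle, with the first two terms arising from DFI's main estimate after completion and the $\min(P,Q)$ term accommodating both an internal correction in their proof and the diagonal $p = q$ that we have excluded.
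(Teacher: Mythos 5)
Your proposal matches the paper exactly: the paper offers no proof of this lemma beyond asserting it as "a very special case of~\cite[Theorem~1]{DFI}", and your reduction (zero-extending the weights to prime support, noting coprimality is automatic for distinct primes, matching notation) is precisely the routine specialization intended. One tiny simplification: since for primes $p\ne q$ the conditions $q\ne p$ and $\gcd(q,p)=1$ coincide, there is no diagonal discrepancy to absorb at all, so that part of your bookkeeping is unnecessary (though harmless).
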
  

We now combine Lemma~\ref{lem:DFI-Original} with  the standard completing technique, 
see~\cite[Section~12.2]{IwKow}, to derive the following bound.

\begin{lemma}\label{lem:DFI-bound}
For  $P, Q \ge 1$, the inequality
$$
        \sum_{P<p\le 2P} |S_p(a;y_p,z_p)|  \le  (PQ)^{1/2+o(1)}  \(P^{1/2}+ Q^{1/2} + \min\{P,Q\}\) 
$$
    holds uniformly over positive integers $a\le PQ$  and for any sequences $(y_p)_{P<p\le 2P}$ and
    $(z_p)_{P<p\le 2P}$ satisfying $Q\le y_p\le z_p\le 2Q$. 
\end{lemma}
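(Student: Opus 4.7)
The plan is to reduce Lemma~\ref{lem:DFI-bound} to the bilinear sum bound of Lemma~\ref{lem:DFI-Original} by removing the $p$-dependence of the summation range $[y_p,z_p]$ through the standard Fourier completion argument. The first step would be to introduce unimodular weights $\alpha_p \in \C$ with $|\alpha_p|=1$ chosen so that $|S_p(a;y_p,z_p)| = \alpha_p S_p(a;y_p,z_p)$ (and $\alpha_p = 0$ when $S_p = 0$). The left-hand side then becomes
$$
\sum_{P<p\le 2P}\,\sum_{\substack{Q\le q\le 2Q\\ q\ne p}} \alpha_p\,\mathbf{1}_{[y_p,z_p]}(q)\,\ep(a\overline{q}_p),
$$
which is almost in the shape required by Lemma~\ref{lem:DFI-Original}, except that the indicator still depends on $p$.

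The next step would be to remove this dependence by Fourier expansion modulo an integer $N$ slightly exceeding $2Q$, say $N = \lceil 2Q\rceil + 1$. Orthogonality of additive characters mod $N$ gives, for every $q \in [Q,2Q]$,
$$
\mathbf{1}_{y_p \le q \le z_p}(q) = \frac{1}{N}\sum_{|h|<N/2} T_p(h)\,e_N(hq), \qquad T_p(h) = \sum_{y_p\le m\le z_p} e_N(-hm),
$$
with the standard geometric-series estimate $|T_p(h)| \ll \min(Q,\, N/|h|)$ for $h \ne 0$ and $|T_p(0)| \le Q+1$. Substituting this expansion and interchanging sums, the problem reduces, for each fixed $h$, to bounding a bilinear form with weights $\alpha_p T_p(h)$ on the $p$-side and $e_N(hq)$ on the $q$-side.

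After normalizing by $M_h = \max_p |T_p(h)|$, the $p$-weight has modulus $\le 1$ and the $q$-weight already does, so Lemma~\ref{lem:DFI-Original} yields
$$
\left|\sum_{p,q} \alpha_p T_p(h)\,e_N(hq)\,\ep(a\overline{q}_p)\right| \ll M_h (PQ)^{1/2+o(1)}\bigl(P^{1/2}+Q^{1/2}+\min(P,Q)\bigr).
$$
Summing over $h$ and using $\sum_{|h|<N/2} M_h \ll Q + \sum_{1\le h<N/2} N/h \ll N\log N$, the prefactor $1/N$ cancels out and the surviving $\log N$ is absorbed into $(PQ)^{o(1)}$, producing exactly the claimed bound. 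The diagonal contribution $q=p$ is already accounted for by Lemma~\ref{lem:DFI-Original}.

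I do not expect any serious obstacle, as this is a textbook completion argument in the spirit of \cite[Section~12.2]{IwKow}. The only points needing a line of verification are that $N \asymp Q$ is compatible with the hypothesis $a \le PQ$ of Lemma~\ref{lem:DFI-Original} (which it is, since that hypothesis is inherited directly from the statement of Lemma~\ref{lem:DFI-bound}) and that the normalization of the weights $\alpha_p T_p(h)$ does not interact with the $q\ne p$ constraint. Neither is a real difficulty, so the bulk of the write-up should consist of setting up notation for $T_p(h)$ and invoking the two cited estimates.
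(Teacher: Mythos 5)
Your proposal is correct and follows essentially the same route as the paper: both use the standard completion technique (Fourier expansion of the indicator of $[y_p,z_p]$ modulo an integer $N\asymp Q$, geometric-series bound $\ll\min(Q,N/\lvert h\rvert)$ on the Fourier coefficients, interchange of summation) to reduce the variable-range sum to the fixed-range bilinear form of Lemma~\ref{lem:DFI-Original}, then sum the $\ll\log N$ contribution over $h$ and absorb it into $(PQ)^{o(1)}$. The only differences are cosmetic bookkeeping choices --- you normalize the $p$-weight by $M_h=\max_p|T_p(h)|$ while the paper bounds the Fourier coefficient and extracts a unimodular factor $\alpha_{h,p}$ --- so no further comparison is needed.
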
  

\begin{proof}Let $N = \rf{2Q}$. Observe that by the orthogonality of exponential functions, we have
$$
\frac{1}{N}\sum_{h=1}^{N}\sum_{y_p \le k \le z_p } \e_N\left(h(q-k)\right)=
  \begin{cases}
   1 & y_p \le q \le z_p, \\
  0 & \text{otherwise}.
  \end{cases}
$$
It follows that   
\begin{align*}
 S_p(a;y_p,z_p) &= \sum_{\substack{y\le q\le 2y\\ q \ne p}}  \ep\(a \overline{q}\)
 \frac{1}{N}\sum_{h=1}^{N} \sum_{y_p \le k \le z_p }  \e_N\left(h(q-k)\right)\\
&=\frac{1}{N} \sum_{h=1}^{N}  \sum_{y_p \le k \le z_p } \e_N\left(-hk\right )
 \sum_{\substack{Q\le q\le 2Q\\ q \ne p}}  \beta_{h,q}\ep\(a \overline{q}_p\), 
\end{align*}
where  $ \beta_{h,q} =   \e_N\(h q\)$.

We also note that for $1 \le h,y_p, z_p \le N$ we have
$$
\sum_{y_p \le k \le z_p } \e_N\left(hk\right )  \ll \frac{N}{\min\{h, N+1 - h\}},
$$
see~\cite[Equation~(8.6)]{IwKow}.   
Thus 
\begin{align*}
 S_p(a;y_p,z_p) & \ll    
 \sum_{h=1}^{N}  \frac{N}{\min\{h, N+1 - h\}}   
\left| \sum_{\substack{Q\le q\le 2Q\\ q \ne p}}  \beta_{h,q}\ep\(a \overline{q}_p\)\right| \\
&  =   
 \sum_{h=1}^{N}  \frac{1}{\min\{h, N+1 - h\}}      \alpha_{h,p}
 \sum_{\substack{Q\le q\le 2Q\\ q \ne p}}  \beta_{h,q}\ep\(a \overline{q}_p\) 
\end{align*} 
for some complex numbers  $\alpha_{h,p}$ with $| \alpha_{h,p}| =1$. 
Therefore, changing the order of summation, we obtain
\begin{align*}
 \sum_{P<p\le 2P} |S_p(a;y_p,z_p)|& \ll    
 \sum_{h=1}^{N}  \frac{1}{\min\{h, N+1 - h\}}   \\
& \qquad \qquad  \sum_{P<p\le 2P}   
 \sum_{\substack{Q\le q\le 2Q\\ q \ne p}}  \alpha_{h,p} \beta_{h,q}\ep\(a \overline{q}_p\).
 \end{align*}
Invoking Lemma~\ref{lem:DFI-Original}, we complete the proof. 
\end{proof}

\subsection{Exponential sums and discrepancy} 
Given a sequence of real numbers $\xi_1, \ldots, \xi_N \in [0,1)$, 
we define its  {\it discrepancy\/} $D_N$ as 
\begin{equation}
\label{eq:Discr}
D_N = \frac{1}{N}\sup_{0\le \gamma \le 1} \left |  \#\{1\le n\le N:~\xi_n\in [0, \gamma)\} - \gamma N \right | \,.
\end{equation}

We now recall  the classical Erd\H{o}s--Tur\'{a}n inequality,  which links the discrepancy and 
exponential sums (see, for instance,~\cite[Theorem~1.21]{DrTi} or~\cite[Theorem~2.5]{KuNi}).

\begin{lemma}
\label{lem:ET}
Let $\xi_n$, $n\in \N$,  be a sequence in $[0,1)$. Then for any $A\in \N$, the discrepancy $D_N$ given by~\eqref{eq:Discr} satisfies
$$
D_N \le 3 \left( \frac{1}{A+1} + \frac{1}{N}\sum_{a=1}^{A}\frac{1}{a} \left| \sum_{n=1}^{N} \e(a\xi_n) \right | \right) \,.
$$
\end{lemma}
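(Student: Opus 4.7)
The plan is to invoke the classical Beurling--Selberg construction of extremal trigonometric polynomials that majorize and minorize the indicator function of a half-open interval, and then to evaluate at the points of the sequence. Fix $\gamma \in [0,1)$ and write $\chi$ for the $1$-periodic indicator of $[0,\gamma)$. I would first produce trigonometric polynomials $V_A^-,\ V_A^+$ of degree at most $A$ such that
$$V_A^-(x) \le \chi(x) \le V_A^+(x) \qquad \text{for all } x \in \R/\Z,$$
with zeroth Fourier coefficients $\hat V_A^\pm(0) = \gamma \pm 1/(A+1)$ and non-zero Fourier coefficients satisfying $|\hat V_A^\pm(a)| \le 1/(A+1) + 1/(\pi|a|)$ for $0 < |a| \le A$. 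These polynomials are obtained by periodizing the Beurling majorant of the sign function on $\R$ via the Poisson summation formula; this is the main technical input and is well established in the literature.

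Next I would apply the sandwich pointwise at the points $\xi_n$ and sum over $n \le N$:
$$\sum_{n=1}^N V_A^-(\xi_n) \le \#\{1 \le n \le N : \xi_n \in [0,\gamma)\} \le \sum_{n=1}^N V_A^+(\xi_n).$$
Expanding the finite Fourier series of $V_A^\pm$ and interchanging summation then yields
$$\sum_{n=1}^N V_A^\pm(\xi_n) = \hat V_A^\pm(0)\, N + \sum_{0 < |a| \le A} \hat V_A^\pm(a) \sum_{n=1}^N \e(a\xi_n).$$
Pairing conjugate frequencies $\pm a$ (so that the inner sums are complex conjugates of one another) and inserting the Fourier-coefficient bounds produces
$$\bigl|\#\{1 \le n \le N : \xi_n \in [0,\gamma)\} - \gamma N\bigr| \le \frac{N}{A+1} + C \sum_{a=1}^A \frac{1}{a} \Bigl|\sum_{n=1}^N \e(a\xi_n)\Bigr|$$
for an explicit absolute constant $C$.

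Dividing by $N$ and taking the supremum over $\gamma \in [0,1)$ (the estimate being uniform in $\gamma$) delivers the claimed bound on $D_N$. The constant $3$ in the statement emerges from a careful accounting of the factor $1/\pi$ in $|\hat V_A^\pm(a)|$, together with the doubling that arises from pairing $\pm a$. The main obstacle is the construction of the Selberg majorant and minorant with simultaneous control of both their integrals (equal to $\gamma \pm 1/(A+1)$) and of their higher Fourier coefficients; this is a nontrivial piece of harmonic analysis that rests on extremal properties of entire functions of exponential type, although by now it is standard and may be found in \cite{DrTi, KuNi}. Once this input is granted, the remainder of the argument is a routine interchange of summation together with orthogonality, and no further analytic number theory is needed.
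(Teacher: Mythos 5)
The paper does not prove this lemma at all: it is the classical Erd\H{o}s--Tur\'an inequality, quoted verbatim from the literature (Drmota--Tichy, Theorem~1.21, or Kuipers--Niederreiter, Theorem~2.5). Your derivation via the Beurling--Selberg majorant and minorant is a correct and standard route to it, and your bookkeeping of the constant checks out: with $|\hat V_A^\pm(a)|\le \frac{1}{A+1}+\frac{1}{\pi|a|}$ and the doubling from pairing $\pm a$, one gets a coefficient $2\bigl(\frac{1}{A+1}+\frac{1}{\pi a}\bigr)\le \frac{2(1+1/\pi)}{a}<\frac{3}{a}$ for $1\le a\le A$, and the zero-frequency contribution $\frac{1}{A+1}$ is absorbed by the outer factor $3$, so the stated inequality follows with room to spare. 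The only caveat is that the entire analytic content of your argument is delegated to the existence of the extremal polynomials $V_A^\pm$ with the stated Fourier-coefficient bounds; you acknowledge this, and it is legitimate for a textbook result, but note that the two references the paper actually cites establish Erd\H{o}s--Tur\'an by a different, more elementary construction (Kuipers--Niederreiter use a direct approximation of the indicator function rather than extremal functions of exponential type), so for the Selberg-polynomial input you would more accurately point to Vaaler's survey or Montgomery's lectures. The Selberg route buys sharper numerical constants and a cleaner conceptual structure; the elementary route avoids any appeal to extremal entire functions. Either way the lemma holds as stated, and nothing in the paper's application depends on which proof is used.
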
 

\subsection{Solutions to some quadratic equation}
One easily checks that the function $\rho(t)$ given by~\eqref{eq:theta-0} is a solution
to  polynomial equation
$$
2t \rho  \left( 1+\frac1t-\rho \right)-1=t^{1/2+\gamma}, 
$$ 
which in turn comes from the equation~\eqref{eq:theta=2pq}.
We need the following elementary result, see~\cite[Lemma~8]{F13}. 

\begin{lemma}  
\label{lem:theta-0} For any fixed $\eta > 0$, there exists some $T(\eta)$, such that the 
 function $\rho(t)$ is decreasing for $t>T(\eta)$ and for  $\gamma\le 1/2-\eta$ satisfies the 
 bound~\eqref{eq:theta-0-Asymp}.
\end{lemma}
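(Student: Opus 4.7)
The plan is to reduce both claims—the asymptotic \eqref{eq:theta-0-Asymp} and eventual monotonicity—to elementary calculus on the explicit formula \eqref{eq:theta-0}. The crucial preliminary step is to simplify the radicand: expanding
$$
(1+t^{-1})^2 - 2t^{-1}(t^{1/2+\gamma}+1) = 1 + t^{-2} - 2t^{\gamma-1/2},
$$
so that
$$
\rho(t) = \tfrac12\bigl(1 + t^{-1} - \sqrt{1 + t^{-2} - 2t^{\gamma-1/2}}\bigr).
$$
This is a much cleaner form to work with, and it is the identity that drives everything.

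For the asymptotic, I would set $x(t) = t^{-2} - 2t^{\gamma-1/2}$ and note that under $0 < \gamma \le 1/2-\eta$ we have $x(t) \to 0$ uniformly (indeed $|x(t)| \le 3t^{-\eta}$ for $t$ large). Applying the Taylor expansion $\sqrt{1+x} = 1 + x/2 - x^2/8 + O(x^3)$ with the uniform remainder valid on $|x|\le 1/2$, the dominant contribution is $-t^{\gamma-1/2}$, while $x^2$ contributes a term of order $t^{2\gamma-1}$ (together with smaller $t^{\gamma-5/2}$ and $t^{-4}$ terms). Since $\gamma\ge 0$ implies $t^{-2} \ll t^{2\gamma-1}$ and $\gamma>0$ implies $\tfrac12 t^{-1} \ll t^{2\gamma-1}$, all other pieces get absorbed into an overall $O(t^{2\gamma-1})$ error, yielding
$$
\rho(t) = \tfrac12 t^{\gamma-1/2} + O(t^{2\gamma-1}),
$$
with implied constant depending only on $\eta$.

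For the monotonicity, I would differentiate directly. Writing $f(t) = 1 + t^{-2} - 2t^{\gamma-1/2}$,
$$
\rho'(t) = -\tfrac12 t^{-2} - \frac{f'(t)}{4\sqrt{f(t)}},
\qquad f'(t) = -2t^{-3} + (1-2\gamma)\, t^{\gamma-3/2}.
$$
Since $1-2\gamma \ge 2\eta > 0$ and $t^{\gamma-3/2}$ dominates $t^{-3}$ for $t$ large, one has $f'(t) > 0$ once $t \ge T_1(\eta)$; combined with $\sqrt{f(t)} \to 1$, both terms in $\rho'(t)$ become strictly negative, so $\rho'(t) < 0$ for $t > T(\eta)$ with $T(\eta)$ depending only on $\eta$.

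The only real care needed is verifying that the threshold $T(\eta)$, the implied constants in the Taylor remainder, and the absorption of subleading terms into $O(t^{2\gamma-1})$ are all uniform in $\gamma$ across $[0,\,1/2-\eta]$. This is where the hypothesis $\gamma \le 1/2 - \eta$ is essential: it prevents $t^{\gamma-1/2}$ from failing to decay and keeps $(1-2\gamma)$ bounded away from zero. Beyond this uniformity bookkeeping the argument is routine.
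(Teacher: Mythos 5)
Your proof is correct and self-contained. Note that the paper does not actually prove this lemma; it simply cites~\cite[Lemma~8]{F13}, so there is no internal argument to compare against. Your derivation — simplify the radicand to $1+t^{-2}-2t^{\gamma-1/2}$, Taylor-expand $\sqrt{1+x}$ with a uniform remainder, and observe that the subleading pieces $\tfrac12 t^{-1}$, $t^{-2}$, $t^{\gamma-5/2}$, $t^{-4}$ are all $O(t^{2\gamma-1})$ once $\gamma\ge 0$ — is exactly the kind of elementary calculus one would expect to find behind Fouvry's lemma, and the uniformity in $\gamma\le 1/2-\eta$ is handled properly via $|x(t)|\le 3t^{-\eta}$ and $1-2\gamma\ge 2\eta$. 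For the monotonicity, you do not even need $\sqrt{f(t)}\to 1$; once $t$ is large enough (uniformly in $\gamma$) that $f(t)>0$ and $f'(t)>0$, both terms of $\rho'(t)=-\tfrac12 t^{-2}-f'(t)/(4\sqrt{f(t)})$ are negative. Two small slips of no consequence: $t^{-1}\ll t^{2\gamma-1}$ already holds at $\gamma=0$ (with equality), so $\gamma\ge 0$ suffices there; and you expanded to order $x^2/8$ when the first-order expansion $\sqrt{1+x}=1+x/2+O(x^2)$ with $x^2\ll t^{2\gamma-1}$ would have been enough.
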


In particular, since $\gamma\le 1/2-\eta$ for a fixed $\eta>0$, we see from Lemma~\ref{lem:theta-0}  that 
\begin{equation}\label{eq:thetaPQ}
\rho(PQ)=\frac12(PQ)^{\gamma-1/2}(1+O(\cL^{-3})),
\end{equation} 
provided that $PQ$ is large enough. 

\section{Proof of  Proposition~\ref{prop: RPQ}}

\subsection{Preliminaries}
\label{sec:prelim}
As in~\cite{F13} we divide the proof into three cases, depending on the size of $P$
\begin{itemize}
\item Small $P$:  $P < x^{1/3}\cL^{-100}$.
\item Medium $P$:   $x^{1/3}\cL^{-100}\le P \le(2x)^{2/5}$.
\item Large $P$: $(2x)^{2/5} < P \le x^{1/2}$. 
\end{itemize}

The treatment in~\cite[Section~4.1]{F13} of range of  \textit{small} $P$ does not  depend on the lower bound on  $\gamma$ and thus 
 gives the desired error term in the asymptotic formula of Proposition~\ref{prop: RPQ} 
 for any $\gamma<1/2 - \eta$.
Hence, this leaves us only with the two remaining  cases of \textit{medium} and \textit{large} $P$.

Furthermore, we may assume that 
\begin{equation}\label{eq: large PQ}
PQ\ge x\cL^{-12}, 
\end{equation}
see also~\cite[Equation~(28)]{F13}. 
Indeed, this is because if $\mathcal{E}(p,\rho(PQ))$ is the set of all congruence classes $s$ modulo  $p$ such that $\overline{s}_p\le \rho(PQ) p$, then
$$
    R_{\gamma}(P,Q)=\sum_{\substack{p\approx P\\y_p\le z_p}}\sum_{s\in \mathcal{E}(p,\rho(PQ))} (\pi(z_p;p,s)-\pi(y_p;p,s)),
$$
where, as usual, $\pi(x;k,a)$ denotes the number of primes $q \le x$ in the arithmetic progression $q \equiv a \mod k$. 
Using the trivial estimate 
$$\pi(z_p;p,s)-\pi(y_p;p,s)\ll Q/p+1\ll Q/p, 
$$ 
and recalling~\eqref{eq:theta-0-Asymp},
 we derive
$$
    R_{\gamma}(P,Q)\ll \rho(PQ) PQ  \ll (PQ)^{\gamma-1/2} PQ = (PQ)^{1/2+\gamma}.
$$
The main term in the asymptotic formula of Proposition~\ref{prop: RPQ} is trivially of size $O(PQ)^{1/2+\gamma}$.
So if $PQ \le x\cL^{-12}$, the error term Proposition~\ref{prop: RPQ} dominates other terms and 
Proposition~\ref{prop: RPQ} holds.  
Thus we can assume~\eqref{eq: large PQ}.

\subsection{Medium $P$}
\label{sec:med P}
We use Lemma~\ref{lem:ET} for the sequence of  $ R(P,Q)$ fractions 
$$
\overline{q}_p/p,  \qquad p<q,\ pq\le x, \ p \approx P, \ q \approx Q.
$$
More precisely, we choose $A =  P-1$, so that all exponential sums $S_p(a;x_p,y_p)$
with   
\begin{equation}\label{eq: yp zp}
y_p=\max\{Q,p\} \mand z_p=\min\{\xi Q,x/p\}, 
\end{equation} 
 which appear in the inequality of Lemma~\ref{lem:ET},  satisfy the condition of 
Lemma~\ref{lem:DFI-bound}.

Hence, using that $P^{1/2}+ Q^{1/2} + \min\{P,Q\} \ll Q^{1/2} + P$, we derive
\begin{equation}\label{eq: R E0 E1 E2}
 R_{\gamma}(P,Q) = \rho(PQ) R(P,Q)   +O\(E_0 +\(E_1  + E_2\) P^{o(1)} \), 
\end{equation} 
where 
$$
E_0 = R(P,Q) P^{-1}, \qquad E_1 = P^{1/2} Q, \qquad E_2 = P^{3/2}Q^{1/2}.
$$

We treat the main term in~\eqref{eq: R E0 E1 E2} via~\eqref{eq:thetaPQ}, 
getting it in the form asserted in Proposition~\ref{prop: RPQ}.

Using the trivial bound $R(P,Q)\ll PQ \cL^{-4}$,   we estimate $E_0$ as 
\begin{equation}\label{eq: Bound E0}
E_0 \ll  Q \cL^{-4}, 
\end{equation} 
which is satisfactory. We also see that for medium $P$ we have 
$$
E_1 = P^{-1/2} (PQ) \le   P^{-1/2} x \le x^{5/6} \cL^{50}
$$
and 
$$
 E_2 = P^{1/2}(PQ)^{1/2} \le P^{1/2}x^{1/2}\le x^{7/10}, 
 $$
 which are both satisfactory since
 $$
 7/10 <  5/6 < 1/2 + \gamma.
 $$
 Hence we establish Proposition~\ref{prop: RPQ} in this case.

\subsection{Large $P$}

 We now  improve the bounds of~\cite{F13} on $R_{\gamma}(P,Q)$
 for {\it large\/} $P$. We note that for $P> (2x)^{2/5}$ we have 
$$
2Q\le 2x/P \le P^{3/2}.
$$
Therefore,   in this case, both Lemma~\ref{lem:KC-bound} and Lemma~\ref{lem:Irving} apply to 
the sums $ S_p(a;y_p,z_p)$ with $y_p$ and $z_p$ as in~\eqref{eq: yp zp}. 

We use Lemma~\ref{lem:ET} for the sequence of  $R(P,Q)$ fractions 
$$
\overline{q}_p/p,  \qquad p<q,\ pq\le x, \ p \approx P, \ q \approx Q.
$$
More precisely, we choose $A =  P-1$, so that all exponential sums $S_p(a;x_p,y_p)$
with   $x_p=\max\{Q,p\}$, $y_p=\min\{\xi Q,x/p\}$, 
 which appear in inequality of Lemma~\ref{lem:ET},  satisfy the bound of 
Lemma~\ref{lem:KC-bound}. 

Hence 
\begin{equation}\label{eq: R E0 F}
 R_{\gamma}(P,Q) = \rho(PQ) R(P,Q)   +O\(E_0 + F P^{o(1)} \), 
\end{equation} 
where,  as before, $E_0 = R(P,Q) P^{-1}$ and 
$$
F= P  Q^{15/16} =  P^{1/16}  (PQ)^{15/16}  \le  P^{1/16}  x^{15/16}  . 
$$
We treat the main term in~\eqref{eq: R E0 F} and the error term $E_0$ as
in Section~\ref{sec:med P}.

Examining when $F \ll x^{1/2+\gamma}\mathcal{L}^{-6}$ we see that   we have a desired result 
provided that 
\begin{equation}\label{eq:Cond1}
P  \ll   x^{16 \gamma-7-\varepsilon}
\end{equation} 
for some fixed $\varepsilon > 0$.

Similarly, using Lemma~\ref{lem:Irving} we derive
\begin{equation}\label{eq: G1 G2 G3}
 R_{\gamma}(P,Q) = \rho(PQ) R(P,Q)   +O\(E_0+ \(G_1    +G_2   + G_3\) P^{o(1)} \), 
\end{equation} 
where,  as in~\eqref{eq: R E0 E1 E2}, we have $E_0 = R(P,Q) P^{-1}$ and 
$$
G_1  = Q^{5/8}P^{5/4}, \qquad  G_2  = Q^{9/10}P, \qquad  G_3 = Q^{13/18}P^{7/6} .
$$
Again, we deal  with the main term in~\eqref{eq: R E0 F} and the error term $E_0$ as
in Sections~\ref{sec:med P}.
 Next, writing
    \begin{align*}
& G_1=(PQ)^{5/8}P^{5/8} \ll x^{5/8}P^{5/8}, \\
& G_2=(PQ)^{9/10}P^{1/10}\ll x^{9/10}P^{1/10}, \\
& G_3 = Q^{13/18}P^{7/6}=(PQ)^{13/18}P^{4/9} \ll x^{13/18}P^{4/9},
    \end{align*}
and recalling~\eqref{eq: Bound E0}, 
we see from~\eqref{eq: G1 G2 G3} that if   $P$ satisfies
\begin{equation}\label{eq:Cond2}
 P   \ll    \min\left\{x^{(8 \gamma-1)/5 },x^{10\gamma-4},x^{(9 \gamma-2)/4}\right\}
 x^{ -\varepsilon}
 \end{equation}
 for some fixed $\varepsilon>0$ then we obtain the desired error term in Proposition~\ref{prop: RPQ}.

\subsection{Concluding the proof}
To obtain the desired result we have to make sure that any $P$ in the range  $x^{2/5} <P \le x^{1/2}$
satisfies at least one of the bounds~\eqref{eq:Cond1} or~\eqref{eq:Cond2}.  
We now consider the monotonically increasing function
$$
H(\gamma) = \max\left\{(20\gamma-6)/9,  \min\left\{(8 \gamma-1)/5, 10\gamma-4,(9 \gamma-2)/4\right\}\right\}
$$
and conclude that Proposition~\ref{prop: RPQ} holds for 
$ \gamma_0 +\eta\le \gamma \le 1/2 - \eta$, where $\gamma_0$ is the root 
of the equation 
$$
H(\gamma_0) = 1/2.
$$
One now easily verifies that $\gamma_0 = 9/20$, which concludes the proof of Proposition~\ref{prop: RPQ}
and thus also of Theorem~\ref{thm:H-gamma}.

\end{document}